\newcommand{\rl}{\mathbb{R}}
\newcommand{\cx}{\mathbb{C}}
\newcommand{\CB}{\mathcal{B}}
\newcommand{\CO}{\mathcal{O}}
\newcommand{\ai}{\sqrt{-1}}
\newcommand{\inj}{\hookrightarrow}
\newcommand{\cst}{\mathrm{const}}
\newcommand{\tr}{\mathrm{tr}}
\newcommand{\actson}{\curvearrowright}
\newcommand{\prj}{\mathbb{P}}
\theoremstyle{plain}
\newtheorem{theorem}{Theorem}[section]
\newtheorem{lemma}[theorem]{Lemma}
\theoremstyle{definition}
\newtheorem{definition}[theorem]{Definition}
\newtheorem{example}[theorem]{Example}
\theoremstyle{definition}
\newtheorem{remark}[theorem]{Remark}
\newtheorem{remarks}[theorem]{Remarks}
\newtheorem{problem}[theorem]{Problem}
\begin{document}

\title{Expected centre of mass of the random Kodaira embedding}
\author{Yoshinori Hashimoto}

\maketitle

\begin{abstract}
Let $X \subset \mathbb{P}^{N-1}$ be a smooth projective variety. To each $g \in SL (N , \mathbb{C})$ which induces the embedding $g \cdot X \subset \mathbb{P}^{N-1}$ given by the ambient linear action we can associate a matrix $\bar{\mu}_X (g)$ called the centre of mass, which depends nonlinearly on $g$. With respect to the probability measure on $SL (N , \mathbb{C})$ induced by the Haar measure and the Gaussian unitary ensemble, we prove that the expectation of the centre of mass is a constant multiple of the identity matrix for any smooth projective variety.
\end{abstract}

\section{Introduction and the statement of the main result}

Let $X$ be a complex smooth projective variety, and $\iota : X \inj \prj (H^0(X,L)^{\vee}) \cong \prj^{N-1}$ be the Kodaira embedding defined with respect to a very ample line bundle $L$ on $X$, where $N := \dim H^0 (X,L)$. There is a natural $SL(N , \cx)$-action on the Kodaira embedding $\iota \mapsto g \cdot \iota$ given by the ambient linear action $SL (N , \cx) \actson \prj^{N-1}$. For each $g \in SL(N , \cx)$ we can define an $N \times N$ hermitian matrix $\bar{\mu}_X (g)$, called the centre of mass of the embedding $g \cdot \iota : X \inj \prj (H^0(X,L)^{\vee})$ (see \S \ref{scmmcom} for more details). This plays an important role in K\"ahler geometry, and depends on $g \in SL(N , \cx)$ in a highly nonlinear manner. For example, when the automorphism group of $(X,L)$ is discrete, there exists $g \in SL(N , \cx)$ such that $\bar{\mu}_X (g)$ is a constant multiple of the identity matrix if and only if the embedding $\iota : X \inj  \prj^{N-1}$ is Chow stable \cite{Luo,Zhang}, which is an important yet subtle algebro-geometric property of $X \subset \prj^{N-1}$.

The following seems to be a natural question to ask.

\begin{problem} \label{precom}
	Let $d \sigma$ be a probability measure on $SL(N , \cx)$. Compute the expectation
	\begin{equation*}
		\mathbb{E}[\bar{\mu}_X (g)] = \int_{g \in SL(N , \cx)} \bar{\mu}_X (g) d \sigma .
	\end{equation*}
\end{problem}

In spite of its apparent simplicity, this is a nontrivial problem since $\bar{\mu}_X (g)$ depends nonlinearly on $g$. The main result of this paper is the following.

\begin{theorem} \label{mainthm}
	Let $X \subset \prj^{N-1}$ be a smooth projective variety. With respect to the probability measure on $SL (N ,\cx)$ defined by the Haar measure on $SU(N)$ and an absolutely continuous unitarily invariant measure of finite volume on $\CB:= SL(N, \cx) / SU(N)$ via the natural fibration structure, $\mathbb{E}[\bar{\mu}_X (g)]$ is a constant multiple of the identity matrix.
\end{theorem}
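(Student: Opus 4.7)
The plan is to exploit two symmetry properties that together force the expectation to be a scalar matrix. First I would establish an $SU(N)$-equivariance of the centre of mass under left multiplication:
\begin{equation*}
	\bar{\mu}_X(u g) = u \, \bar{\mu}_X(g) \, u^{*} \quad \text{for all } u \in SU(N),\ g \in SL(N,\cx).
\end{equation*}
This should follow directly from the definition: the centre of mass is assembled from the Fubini--Study projection $\hat{z} \hat{z}^{*} / |\hat{z}|^{2}$ (up to a scalar trace correction) integrated against the Fubini--Study volume form pulled back to $X$ via $g \cdot \iota$. Replacing $g$ by $u g$ multiplies the local lift $\hat{z}$ by $u$, while the $SU(N)$-invariance of the Fubini--Study metric ensures that the pulled-back volume form is unchanged; unitarity then produces precisely conjugation by $u$ on the resulting hermitian matrix.

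Second, I would verify that the probability measure $d\sigma$ on $SL(N, \cx)$ is invariant under the left action $g \mapsto u_{0} g$, $u_{0} \in SU(N)$. Parametrise $g = b^{1/2} u$ with $b = g g^{*} \in \CB$ (identifying $\CB$ with positive hermitian matrices of determinant one) and $u = b^{-1/2} g \in SU(N)$. A short computation shows that left multiplication acts as $(b, u) \mapsto (u_{0} b u_{0}^{*},\, u_{0} u)$. Left-invariance of the Haar measure on $SU(N)$ together with the unitary invariance of the measure on $\CB$ (invariance under $b \mapsto u_{0} b u_{0}^{*}$, as satisfied in particular by any GUE density) yields the required invariance of $d\sigma$.

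Combining these two properties, for every $u_{0} \in SU(N)$ one has
\begin{equation*}
	\mathbb{E}[\bar{\mu}_X(g)] = \mathbb{E}[\bar{\mu}_X(u_{0} g)] = u_{0} \, \mathbb{E}[\bar{\mu}_X(g)] \, u_{0}^{*},
\end{equation*}
where the first equality uses invariance of $d\sigma$ (via change of variables $g \mapsto u_{0}^{-1} g$) and the second uses the equivariance and linearity of expectation. Setting $M := \mathbb{E}[\bar{\mu}_X(g)]$, this reads $u_{0} M = M u_{0}$ for all $u_{0} \in SU(N)$, so $M$ intertwines the standard representation of $SU(N)$ on $\cx^{N}$ with itself; since that representation is irreducible, Schur's lemma forces $M$ to be a scalar multiple of the identity. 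The real content of the theorem lies in the equivariance step, where one must carefully unpack the definition of $\bar{\mu}_X$; I expect this to be the main (if modest) obstacle, while the measure-theoretic and Schur-type steps are essentially automatic.
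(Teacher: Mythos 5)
Your proposal is correct, and it takes a genuinely different --- and considerably shorter --- route than the paper. Both of your key claims check out against the paper's definitions. The equivariance $\bar{\mu}_X(ug) = u\,\bar{\mu}_X(g)\,u^{*}$ follows from Lemma \ref{eqcmutf} together with the observation that $\sum_{l}|Z_l(ug)|^2 = \sum_{l}|Z_l(g)|^2$ forces $\tilde{\omega}_{H_{ug}} = \tilde{\omega}_{H_g}$ and hence $d\nu_{H_{ug}} = d\nu_{H_g}$ (this second half is essential and is exactly the point where $\mu_p(gu)$ would fail to behave, so your restriction to left multiplication is the right one). The left $SU(N)$-invariance of $d\sigma$ follows from the disintegration formula of Lemma \ref{lmvldbca}: in the coordinates $g = hu$ with $h = (gg^{*})^{1/2}$, left translation by $u_0$ acts as $(h,u) \mapsto (u_0 h u_0^{*}, u_0 u)$, which preserves $d\sigma_B \otimes d\sigma_{SU}$ by the unitary invariance of $d\sigma_B$ and the left invariance of the Haar measure; the fibre identification is insensitive to the choice of point in the fibre, again by Haar invariance. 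In effect you have upgraded the paper's own proof of Theorem \ref{mainthm2} from $SU(N)$ to $SL(N,\cx)$ by noticing that the only additional input needed is this left $SU(N)$-invariance of $d\sigma$, after which Schur's lemma (or the paper's own ``hermitian matrix commuting with all of $SU(N)$'' argument) finishes the proof. The paper instead passes through the incidence variety $\CI$, the coarea formula, and the Jacobian identity of Proposition \ref{lmjac} to exchange the order of integration, then diagonalises $gg^{*}$ and averages over $SU(N)$-conjugation and cyclic permutations; this is much heavier, but it yields the identity $J\Psi(g,z)^{-1}\,\omega^n_{H_g}/\omega^n_{H_e} = 1$, which has independent geometric content beyond the statement of the theorem. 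Your argument buys brevity and transparency; the paper's buys the explicit coarea machinery. For completeness you should record the constant, which in both approaches comes for free from $\tr\,\bar{\mu}_X(g) = \mathrm{Vol}(X,L)$ and the fact that $d\sigma$ is a probability measure: $\mathbb{E}[\bar{\mu}_X(g)] = \frac{\mathrm{Vol}(X,L)}{N}\cdot\mathrm{id}_{N\times N}$.
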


See \S\ref{scrdmt} for the details of the measure on $SL (N ,\cx)$ as stated in the above, defined by the fibration $SU(N) \to SL(N , \cx) \to \CB$; it is also discussed therein that the measure on $SL (N , \cx)$ induced by the Gaussian unitary ensemble on $\CB$ (Example \ref{exgue}) satisfies all the properties stated in the theorem. We also note that the absolute continuity of the measure on $\mathcal{B}$ is meant to be with respect to the Haar measure on $\mathcal{B}$.

The study of K\"ahler and Fubini--Study metrics in connection to the probability theory, such as the random matrix theory, has been an active area of research. There are works e.g.~\cite{Ber11,Ber14,Ber17,Ber18,Ber20} by Berman, and \cite{FKZ11,FKZ12a,FKZ13,FlZel,Kle14,KZ14,KZ16,SongZel12} by Ferrari, Flurin, Klevtsov, Song, Zelditch. On the other hand, probabilistic aspects of the centre of mass $\bar{\mu}_X (g)$ does not seem to have been actively investigated in the aforementioned works, which is the focus of the present paper.

As pointed out in the above, whether $\bar{\mu}_X (g)$ itself is a constant multiple of the identity matrix depends on the Chow stability of $X \subset \prj^{N-1}$ by the result of Luo \cite{Luo} and Zhang \cite{Zhang}. Such subtleties disappear, however, when we take the average over $g \in SL(N , \cx)$ as in Theorem \ref{mainthm}.

While the main point of Theorem \ref{mainthm} is that $\mathbb{E}[\bar{\mu}_X (g)]$ is a constant multiple of the identity for any smooth projective variety, it implies in particular that the expectation $\mathbb{E}[\bar{\mu}_X (g)]$ keeps being a constant multiple of the identity for the embedding $X \inj \prj (H^0 (X , L^{\otimes k})^{\vee})$ for any higher exponent $k \gg 1$. This may be interesting in the study of the large $N$ behaviour of random K\"ahler metrics, initiated by Ferrari--Klevtsov--Zelditch \cite{FKZ13}. One may hope, for example, that $\mathbb{E}[\bar{\mu}_X (g)]$ keeps being a multiple of the identity for $k \gg 1$ gives a nontrivial constraint to the large $N$ asymptotic behaviour of their theory.

We also note that we can prove the following unitary version of Theorem \ref{mainthm}, although the proof (given in \S \ref{scpfmntm2}) is much easier.

\begin{theorem} \label{mainthm2}
	Let $X \subset \prj^{N-1}$ be a smooth projective variety. With respect to the Haar measure $d \sigma_{SU}$ on $SU(N)$, the expectation
	\begin{equation*}
		\mathbb{E}_{SU}[\bar{\mu}_X (u)] := \int_{u \in SU(N)} \bar{\mu}_X (u) d \sigma_{SU}
	\end{equation*}
	of the centre of mass of $X  \subset \prj^{N-1}$is a constant multiple of the identity matrix.
\end{theorem}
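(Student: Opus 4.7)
The plan is to reduce Theorem \ref{mainthm2} to Schur's lemma applied to the standard representation of $SU(N)$. The key ingredient, to be established in \S\ref{scmmcom}, is the $SU(N)$-equivariance of the centre of mass: for all $u \in SU(N)$ and $g \in SL(N, \cx)$,
\[
\bar{\mu}_X(u g) = u \, \bar{\mu}_X(g) \, u^*.
\]
This reflects the fact that replacing $g$ by $ug$ amounts to a unitary change of homogeneous coordinates on $\prj^{N-1}$, under which the hermitian form $\bar{\mu}_X(g)$ on $H^0(X,L)^\vee$ transforms by conjugation.

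Setting $g = I_N$ and writing $M := \bar{\mu}_X(I_N)$, the equivariance gives $\bar{\mu}_X(u) = u M u^*$, so
\[
\mathbb{E}_{SU}[\bar{\mu}_X(u)] = \int_{SU(N)} u M u^* \, d\sigma_{SU}(u).
\]
By the left-invariance of the Haar measure, for any $v \in SU(N)$,
\[
v \, \mathbb{E}_{SU}[\bar{\mu}_X(u)] \, v^* = \int_{SU(N)} (vu) M (vu)^* \, d\sigma_{SU}(u) = \mathbb{E}_{SU}[\bar{\mu}_X(u)],
\]
so $\mathbb{E}_{SU}[\bar{\mu}_X(u)]$ commutes with every element of $SU(N)$. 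Since the standard representation of $SU(N)$ on $\cx^N$ is irreducible, Schur's lemma forces $\mathbb{E}_{SU}[\bar{\mu}_X(u)]$ to be a scalar multiple of $I_N$; taking the trace and using that $\tr \bar{\mu}_X(g)$ is independent of $g$ (with the standard normalisation) pins down the scalar explicitly.

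The only genuine input is the $SU(N)$-equivariance of $\bar{\mu}_X$; once that is in place, no analysis is required. This clean structure is precisely what fails in the setting of Theorem \ref{mainthm}: averaging along the non-compact fibres $\CB = SL(N, \cx)/SU(N)$ cannot be handled by Schur's lemma alone, because $\bar{\mu}_X$ depends in a genuinely nonlinear way on the $\CB$-factor, and this nonlinearity must be confronted before the unitary averaging on each fibre can finish the argument — which is why Theorem \ref{mainthm2} is substantially easier than Theorem \ref{mainthm}.
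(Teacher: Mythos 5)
Your proposal is correct and follows essentially the same route as the paper: the equivariance $\bar{\mu}_X(u) = u\,\bar{\mu}_X(e)\,u^*$ (which the paper obtains from Lemma \ref{eqcmutf} together with $d\nu_{H_u} = d\nu_{H_e}$), left-invariance of the Haar measure, the observation that a hermitian matrix commuting with all of $SU(N)$ is scalar (Schur's lemma), and the trace normalisation $\tr\bar{\mu}_X(u) = \mathrm{Vol}(X,L)$. No substantive difference.
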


We can also define a variant $\bar{\mu}_{X , \nu}$ of the centre of mass, as in Definition \ref{defcomnu}, by fixing a volume form $d \nu$ on $X$. It turns out that Theorems \ref{mainthm} and \ref{mainthm2} easily extend to this variant, as explained in Remarks \ref{rmthm2var} and \ref{rmthmvar}, essentially because $\bar{\mu}_{X , \nu} (g)$ depends on $g \in SL(N, \cx)$ in a much less nonlinear manner than $\bar{\mu}_{X} (g)$. The author is grateful to the anonymous referee for suggesting this point to him.

\begin{remark}
	Although we shall only treat $SL(N , \cx)$ and $SU(N)$ throughout this paper, the determinant one condition does not play any significant role. We can run exactly the same argument for $GL (N , \cx)$ and $U(N)$ to get the same results, in fact with a slightly simpler proof.
\end{remark}

\subsection*{Acknowledgements}

The author is immensely grateful to Jean-Yves Welschinger for generously sharing with him a key insight that was very important in this work. He is also grateful to Hiroyuki Fuji, Yoshiyuki Kagei, Tatsuya Miura, and Yoshihiro Tonegawa for helpful discussions. He thanks the anonymous referees for helpful comments, particularly for suggesting to extend the results to the variant $\bar{\mu}_{X , \nu}$ in Definition \ref{defcomnu}.

This version of the article has been accepted for publication, after peer review (when applicable) but is not the Version of Record and does not reflect post-acceptance improvements, or any corrections. The Version of Record is available online at: http://dx.doi.org/10.1007/s12220-021-00778-y

\section{Preliminaries}

\subsection{Random matrices} \label{scrdmt}

Our aim is to define a class of probability measures on $SL(N , \cx)$ which has some good properties as in the statement of Theorem \ref{mainthm}. The precise description of such measures is given in Definition \ref{dfhguems}, but that needs to be accompanied by a review of some elementary results in the theory of random matrices; the details can be found e.g.~in \cite{agz,deift1,deift2,mehta} or any other standard textbooks on random matrices.

Let $\CB := SL(N, \cx) / SU(N)$ be the left coset space, which can be naturally identified with the set of all positive definite hermitian matrices (of determinant one) on $\cx^N$, which gives $SL(N , \cx)$ a natural structure of a principal $SU(N)$-bundle
\begin{displaymath}
		\xymatrix{ SU(N) \ar[r] & SL(N , \cx)  \ar[d]^{\pi} \\  & \CB  } 
\end{displaymath}
by the projection
\begin{equation} \label{eprbdpi}
	\pi : SL(N, \cx) \ni g \mapsto g g^{*}  \in \CB ,
\end{equation}
where $g^{*}$ stands for the hermitian conjugate of $g$ with respect to the hermitian form represented by the identity matrix on $\cx^N$. Throughout, we shall write $e$ for the identity in $SL(N , \cx)$ or $SU(N)$.
\begin{definition} \label{dfhguems}
We set our notational convention, and the definition of the measure $d \sigma$ on $SL(N , \cx)$, as follows.
\begin{itemize}
	\item We write $d \sigma_{SU}$ for the Haar measure on $SU(N)$ of unit volume.
	\item We fix a measure $d \sigma_B$ on $\CB$, and assume that $d \sigma_B$ is absolutely continuous, unitarily invariant, and of finite volume.
	\item Given a measure $d \sigma_B$ on $\CB$ and $d \sigma_{SU}$ on $SU(N)$, the measure defined on $SL (N , \cx)$ via the fibration structure (\ref{eprbdpi}) is denoted by $d \sigma$.
\end{itemize}
\end{definition}

Given any measure $d \sigma$ on $SL(N , \cx)$ as defined above, it is immediate that $d \sigma$ is of finite volume (see also Lemma \ref{lmvldbca}). Henceforth without loss of generality we shall assume
\begin{equation} \label{equvlsg}
	\int_{SL(N , \cx)} d \sigma = 1
\end{equation}
by scaling, i.e.~$d \sigma$ is a probability measure on $SL(N , \cx)$.

We have a more explicit formula for $d \sigma$, which follows immediately from the above definition.
\begin{lemma} \label{lmvldbca}
	Suppose that $d \sigma$ is a probability measure on $SL(N , \cx)$ defined as in Definition \ref{dfhguems}. If $\phi : SL(N , \cx) \to \rl$ is a bounded measurable function, we have
	\begin{equation*}
		\int_{SL(N , \cx)} \phi (g) d \sigma (g) = \frac{1}{\mathrm{Vol} (\CB )} \int_{\CB}  d \sigma_B (hh^*)\int_{\pi^{-1} (hh^*)}  \phi (h u) d \sigma_{SU} (u) ,
	\end{equation*}
	where $\mathrm{Vol} (\CB ) := \int_{\CB} d \sigma_B$ is the volume of $\CB$ with respect to $d \sigma_B$, and $h \in SL(N , \cx)$ is a hermitian matrix such that $\pi (g) = hh^*$.
\end{lemma}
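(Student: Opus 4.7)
The plan is to view the lemma as essentially an application of Fubini's theorem, once the informal phrase ``the measure defined on $SL(N,\cx)$ via the fibration structure'' in Definition \ref{dfhguems} is unpacked. The key structural observation is that the principal $SU(N)$-bundle $\pi : SL(N,\cx) \to \CB$ is globally trivial. Indeed, since every $b \in \CB$ is a positive definite hermitian matrix of determinant one, it admits a unique positive definite hermitian square root $\sqrt{b} \in SL(N,\cx)$ with $\sqrt{b}\,\sqrt{b}^{*} = b$. The assignment $b \mapsto \sqrt{b}$ is a smooth global section of $\pi$, so the map $\Psi : \CB \times SU(N) \to SL(N,\cx)$, $(b,u) \mapsto \sqrt{b}\, u$, is a diffeomorphism.

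I would then interpret the construction of $d\sigma$ as follows: $d\sigma$ is the pushforward under $\Psi$ of the product measure $\frac{1}{\mathrm{Vol}(\CB)} \, d\sigma_B \otimes d\sigma_{SU}$ on $\CB \times SU(N)$, the normalizing factor $1/\mathrm{Vol}(\CB)$ being chosen so that $d\sigma$ has total mass one, consistent with (\ref{equvlsg}) and with $d\sigma_{SU}$ being of unit volume. Fubini's theorem applied through the trivialization $\Psi$ then immediately yields
\begin{equation*}
	\int_{SL(N,\cx)} \phi(g) \, d\sigma(g) = \frac{1}{\mathrm{Vol}(\CB)} \int_{\CB} d\sigma_B (b) \int_{SU(N)} \phi(\sqrt{b}\, u) \, d\sigma_{SU}(u).
\end{equation*}

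The remaining step is to replace the distinguished representative $\sqrt{b}$ by an arbitrary hermitian $h \in SL(N,\cx)$ with $hh^{*} = b$, as in the statement. Given such an $h$, a direct computation shows that $\sqrt{b}^{-1} h$ lies in $SU(N)$, so $h = \sqrt{b}\, u_0$ for some $u_0 \in SU(N)$. Left-invariance of the Haar measure $d\sigma_{SU}$ then gives
\begin{equation*}
	\int_{SU(N)} \phi(\sqrt{b}\, u) \, d\sigma_{SU}(u) = \int_{SU(N)} \phi(\sqrt{b}\, u_0 u_0^{-1} u) \, d\sigma_{SU}(u) = \int_{SU(N)} \phi(h u_0^{-1} u) \, d\sigma_{SU}(u) = \int_{SU(N)} \phi(h v) \, d\sigma_{SU}(v),
\end{equation*}
and the parametrization $u \mapsto hu$ identifies $SU(N)$ with the fiber $\pi^{-1}(hh^{*})$, giving the formula in the stated form. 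I do not anticipate any serious difficulty; the lemma is essentially a bookkeeping consequence of the global triviality of $\pi$ and Fubini, with the only mild subtlety being the independence of the choice of hermitian representative, which is handled by invariance of the Haar measure.
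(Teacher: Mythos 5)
Your proof is correct and matches the paper's intent: the paper offers no argument at all, asserting that the lemma ``follows immediately from the above definition,'' and your unpacking via the global section $b \mapsto \sqrt{b}$ (polar decomposition), Fubini on the resulting product structure, and left-invariance of $d\sigma_{SU}$ to pass to an arbitrary hermitian representative $h$ is exactly the intended reading, consistent with how the lemma is later applied with $h = \eta \Lambda \eta^{*}$.
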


We now recall some basic facts on the Euclidean volume form (or its associated Lebesgue measure) on the $N \times N$ hermitian matrices (not necessarily positive definite or of determinant one), induced by the natural Euclidean metric. By unitarily diagonalising a hermitian matrix $\tilde{H}$ as $\tilde{H} = u^{-1} \Lambda u$ for $u \in U(N)$ and $\Lambda = \mathrm{diag} (\lambda_1 , \dots , \lambda_N ) \in \rl^N$, we can write (see e.g.~\cite[\S 2]{fyodorov}, \cite[Chapter 5]{deift1}, \cite[Chapter 2]{deift2})
\begin{equation*}
	d \tilde{H} = \Delta^2 ( \lambda ) \; \prod_{i=1}^N d \lambda_i \; d \sigma_{U},
\end{equation*}
where $d \sigma_U$ is the Haar measure on $U(N)$ and $\Delta^2 ( \lambda )$ is the square of the Vandermonde determinant
\begin{equation*}
	\Delta ( \lambda ) := \prod_{1 \le i \neq j \le N} ( \lambda_i - \lambda_j ).
\end{equation*}
We consider the volume form on $\CB$, which consists of positive definite hermitian matrices $H$ of determinant one, induced by the Euclidean metric as above. Setting $\lambda_N = \prod_{i=1}^{N-1} \lambda_i^{-1}$ and carrying out the computation exactly as in \cite[\S 2]{fyodorov}, we find
\begin{equation} \label{eqdhpdcp}
	dH  = \Delta^2 ( \lambda ) \gamma (\lambda ) \; \prod_{i=1}^{N-1} d \lambda_i \; d \sigma_{SU},
\end{equation}
for some smooth positive function $\gamma \in C^{\infty} (\rl^{N-1}_{>0} , \rl_{>0})$ on the $(N-1)$-fold direct product of positive real numbers $\rl^{N-1}_{>0}$. The notation $\delta ( \log \det \tilde{H} ) d \tilde{H}$ using the delta function is also used e.g.~in \cite[\S 4.1]{FKZ13} to denote $d H $ as in (\ref{eqdhpdcp}).

Now returning to our original setting, we note that the measure $d \sigma_B$ on $\CB$ being absolutely continuous  means that we can write
\begin{equation} \label{eqdstrho}
	d \sigma_B = \rho (H) dH ,
\end{equation}
where $dH$ is as defined in (\ref{eqdhpdcp}) and $\rho : \CB \to [ 0 , + \infty ) $ is a measurable function (called the Radon--Nikodym density) which is known to exist by the Radon--Nikodym theorem. Moreover, $d \sigma_B$ being of finite volume implies
\begin{equation} \label{eqrhfv}
	\int_{\CB} \rho (H) dH < + \infty .
\end{equation}

Finally, $d \sigma_B$ being unitarily invariant means that $d \sigma_B (H) = d \sigma_B (u H u^{-1})$ for all $H \in \CB$ and $u \in SU(N)$, which is equivalent to saying that $\rho (H)$ depends only on the eigenvalues $\lambda_1 , \dots , \lambda_N$ of $H$ (where $\lambda_N = \prod_{i=1}^{N-1} \lambda_i^{-1}$). By abuse of notation we also write $\rho (\lambda_1 , \dots , \lambda_{N-1} )$ for $\rho (H)$. With this notation, the finite volume condition (\ref{eqrhfv}) translates to
\begin{equation} \label{eqrhfvpd}
	\int_{\rl^{N-1}_{>0}} \rho (\lambda_1 , \dots , \lambda_{N-1} ) \Delta^2 ( \lambda ) \gamma (\lambda) \; \prod_{i=1}^{N-1} d \lambda_i < + \infty .
\end{equation}

\begin{example} \label{exgue}
	An example of the measure as defined in Definition \ref{dfhguems} can be given by the \textbf{Gaussian unitary ensemble} on $\CB$ (or more precisely, the Gaussian unitary ensemble restricted to the set of positive definite hermitian forms $\CB$) defined by the following Radon--Nikodym density
\begin{equation*}
	\rho(H) = \exp \left(- \frac{1}{2} \tr(H^2) \right) .
\end{equation*}
Recalling (\ref{eqdhpdcp}), the Gaussian unitary ensemble $d \sigma_B$ can be written more explicitly as 
\begin{equation*}
		d \sigma_{B } = \cst . \Delta^2 (\lambda ) \gamma (\lambda) \exp \left( - \frac{1}{2} \sum_{i=1}^N \lambda_i^2 \right)  \; \prod_{i=1}^{N-1} d \lambda_i \; d \sigma_{SU},
\end{equation*}
with $\lambda_N = \prod_{i=1}^{N-1} \lambda_i^{-1}$, up to an overall positive constant. With the Haar measure $d \sigma_{SU}$ on the fibres of $\pi$, the Gaussian unitary ensemble defines a probability measure $d \sigma$ on $SL(N , \cx)$ satisfying all the properties of Definition \ref{dfhguems}.
\end{example}

\begin{remark}
	A well-known theorem \cite[Chapter 2]{mehta} in fact shows that, if $\rho(H)$ is absolutely continuous, unitarily invariant, and moreover the diagonal entries and the real and imaginary parts of the off-diagonal entries of $H$ are statistically independent, $\rho (H)$ must be of the form $\exp (- (a \mathrm{tr} (H^2) + b \mathrm{tr} (H) +c) )$ for some constants $a>0$, $b, c \in \rl$. 
\end{remark}

\begin{example}
	Yet another example of the measure $d \sigma_B$ on $\CB$ is given by the \textbf{heat kernel measure}, which is defined by the heat kernel on the homogeneous manifold $\CB = SL (N , \cx ) / SU(N )$. More explicitly, the heat kernel measure $d \sigma_{B, t}$, defined for each $t >0$, can be written in terms of the Lebesgue measure $dH$ on $\CB$ and the eigenvalues $\lambda'_1 , \dots , \lambda'_N$ of $\log H$ (i.e.~the $\rl^N$-part of the polar coordinates on $\CB$) as
	\begin{equation*}
		d \sigma_{B , t} := \cst . \frac{\Delta ( \lambda' )}{\Delta ( e^{\lambda'})} \exp \left( - \frac{1}{4t} \sum_{i=1}^N (\lambda'_i)^2 \right) dH,
	\end{equation*}
	up to an overall positive constant. The above measure satisfies all the properties in Definition \ref{dfhguems} for each $t >0$. See \cite[Proposition 3.2]{Gangolli68} and \cite[\S 3.1]{KZ16} for more details.
\end{example}

\begin{remark}
	Klevtsov--Zelditch \cite[\S 5]{KZ14} considered the measure $\exp (- \gamma S_{\nu } (H)) dH$,  where $\gamma >0 $ is a constant and $S_{\nu }$ is a certain functional defined on $\CB$ with respect to a volume form $\nu$ on $X$, for the study of the partition function of some field theory. Interesting as it is, the unitary invariance $S_{\nu } (H) = S_{\nu } (u H u^{-1})$ (for all $u \in SU(N)$) does not seem to hold for $S_{\nu}$, so Theorem \ref{mainthm} does not seem to apply to the case when we use $\exp (- \gamma S_{\nu } (H)) dH$ as a measure on $\CB$.
\end{remark}

\begin{remark} \label{rmrfbs}
	Note that the measure $d \sigma_B$ or $d \sigma$ as discussed in the above depends on the fixed hermitian form on $\cx^N$, represented by the identity matrix. This corresponds to the choice of the reference basis $\{ Z_i \}_{i=1}^{N}$ that we take to identify $H^0 (X,L)$ with $\cx^N$ in \S \ref{scmmcom}.
\end{remark}

\subsection{Moment maps and the centre of mass} \label{scmmcom}

We review the ingredients from complex geometry that we need in this paper. Let $X$ be a complex smooth projective variety of complex dimension $n$, with a very ample line bundle $L$ and the associated embedding $\iota : X \inj \prj (H^0 (X , L)^{\vee})$.

We fix a basis for $H^0(X,L)$ once and for all and identify $\prj (H^0(X,L)^{\vee}) \cong \prj^{N-1}$, where $N := \dim H^0(X,L)$; we also note that the basis we fixed here can be identified with an orthonormal basis for the hermitian form represented by the identity matrix on $\cx^N \cong H^0(X,L)$ (see also Remark \ref{rmrfbs}). With respect to such a reference basis, we write $[Z_1 : \cdots : Z_N]$ for the homogeneous coordinates for $\prj^{N-1}$. Furthermore, by abuse of terminology, we also write $\{ Z_i \}_{i=1}^N$ for the reference basis itself. Pick $g \in SL(N , \cx)$ and write
\begin{equation} \label{eqzijg}
	Z_i(g) := \sum_{j=1}^N g_{ij} Z_j ,
\end{equation}
where $g_{ij}$ is the matrix representation of $g$ with respect to the basis $\{ Z_i \}_{i=1}^N$. Note that $\{ Z_i(g)\}_{i=1}^N$ defines a new basis for $H^0(X,L)$. Throughout, we shall write
\begin{equation*}
	H_g := (g^{-1})^{*} g^{-1} = (g g^{*})^{-1}
\end{equation*}
for the the positive definite hermitian matrix on $H^0(X,L)$ that has $\{ Z_i(g)\}_{i=1}^N$ as its orthonormal basis. The hermitian conjugate (with respect to the basis $\{ Z_i \}_{i=1}^N$) will be denoted by $*$, and the special unitary group $SU(N)$ is always meant to preserve the hermitian form $H_e$ which has $\{ Z_i \}_{i=1}^N$ as its orthonormal basis.

For each positive definite hermitian form on $\cx^N$, it is a foundational result in complex geometry that we have a K\"ahler metric on $\prj^{N-1}$ called the Fubini--Study metric (see e.g.~\cite[Chapter 0, \S 2]{GrHar} for more details).

\begin{definition} \label{dffsmc}
	The \textbf{Fubini--Study metric} $\tilde{\omega}_{H_e}$ on $\prj^{N-1}$ defined by $H_e$ is an $SU(N)$-invariant K\"ahler metric on $\prj^{N-1}$, whose explicit formula on $\cx^{N-1} = \{ Z_1 \neq 0 \} \subset \prj^{N-1}$ is given by 
	\begin{equation*}
		\tilde{\omega}_{H_e} = \frac{\ai}{2 \pi} \partial \bar{\partial} \log \left( 1 + \sum_{i=2}^{N} |z_i|^2 \right)
	\end{equation*}
	where $z_i := Z_i / Z_1$ for $i=2 , \dots , N$. By abuse of terminology, the restriction of $\tilde{\omega}_{H_e}$ to $\iota (X) \subset \prj^{N-1}$ is also called the Fubini--Study metric on $\iota (X)$, and written $\omega_{H_e} := \iota^* \tilde{\omega}_{H_e}$.
\end{definition}

While the above definition is often stated for a fixed hermitian matrix, different hermitian matrices lead to different Fubini--Study metrics; for the hermitian matrix $H_g$, the associated Fubini--Study metric $\tilde{\omega}_{H_g}$ can be written, on $\cx^{N-1} = \{ Z_1(g) \neq 0 \} \subset \prj^{N-1}$, as
\begin{equation*}
	\tilde{\omega}_{H_g} = \frac{\ai}{2 \pi} \partial \bar{\partial} \log \left( 1 + \sum_{i=2}^{N} |z_i (g)|^2 \right)
\end{equation*}
by replacing $z_i$ with $z_i (g) := Z_i (g) / Z_1 (g)$. While the isometry group of $\tilde{\omega}_{H_g} $ is isomorphic to $SU(N)$, it is not the same $SU(N)$ that we fixed above; while the $SU(N)$ as above preserves the hermitian form $H_e$, in general it does not preserve $H_g$ if $g \neq e$. Recall also that $\omega_{H_g} := \iota^* \tilde{\omega}_{H_g} \in c_1 (L)$ for all $g \in SL (N , \cx )$.

From the above definition, by writing in terms of polar coordinates $z_i (g) = r_i (g) e^{\ai \theta_i (g)}$ we have
\begin{equation} \label{eqvffspn}
	\tilde{\omega}_{H_g}^{N-1} = \frac{1}{\left( 1 + \sum_{i=2}^{N-1} r_i (g)^2 \right)^{N-1}} \prod_{i=2}^{N} \frac{r_i (g) d r_i (g) \wedge d \theta_i (g)}{2 \pi}.
\end{equation}
Note also that the restriction of $\tilde{\omega}^n_{H_g}$ to $\iota (X)$ defines a volume form on $\iota (X)$, which we write as
\begin{equation*}
	d \nu_{H_g} := \frac{\omega_{H_g}^n}{n!}. 
\end{equation*}
The total volume of $X$ with respect to $d \nu_{H_g}$ can be computed as
\begin{equation} \label{eqvlc1l}
	\int_X d \nu_{H_g} = \int_X c_1 (L)^n / n! =: \mathrm{Vol} (X,L),
\end{equation}
which depends only on $(X,L)$ and is independent of $g \in SL(N, \cx )$.

Recall that ($\ai $ times) the moment map $\mu_{SU} : \prj^{N-1} \to \ai \mathfrak{su} (N)$ for the $SU(N)$-action on $\prj^{N-1}$ is given by
\begin{equation*}
	\mu_{SU} ([x_1 : \cdots : x_N ] )_{ij} = \frac{x_i \bar{x}_j}{\sum_{l=1}^N |x_l|^2} - \frac{\delta_{ij}}{N} ,
\end{equation*}
where $\delta_{ij}$ is the Kronecker delta and the subscript $ij$ stands for the $(i,j)$-th entry of the $N \times N$ matrix. The second term $\delta_{ij}/N$ is just to make $\mu_{SU}$ trace-free. Observing that $SU(N)$ acts transitively on $\prj^{N-1}$, we find that $\mu_{SU}$ naturally defines a map $\mu_{SU,p} : SU(N) \to \ai \mathfrak{su} (N)$ by $\mu_{SU,p} (u) := \mu_{SU} (u p)$ where $p \in \prj^{N-1}$ is a fixed reference point.

We now consider the ``complexified'' version of the above moment map, defined for $SL (N , \cx) = SU(N)^{\cx}$. We fix a reference point $p \in \prj^{N-1}$ represented by the homogeneous coordinates $[Z_1 : \cdots : Z_N ]$, and observe that for each $g \in SL (N , \cx)$ the point $g p \in \prj^{N-1}$ is represented by $[Z_1 (g) : \cdots : Z_N (g) ]$ in terms of the notation (\ref{eqzijg}). We then define an $N \times N$ hermitian matrix $\mu_{p} (g) \in \ai \mathfrak{u} (N)$ whose $(i,j)$-th entry is given by
\begin{equation} \label{eqdfmup}
	\mu_{p} (g)_{ij} = \frac{Z_i(g) \overline{Z_j(g)}}{\sum_{l=1}^N |Z_l(g)|^2} .
\end{equation}
This corresponds to the first term of $\mu_{SU}$ at the point $g p$; note that $g p$ is in the $SU(N)^{\cx}$-orbit of $p$. We choose not to normalise the trace of $\mu_p (g)$ to be zero, to be consistent with the notation in the literature. The centre of mass, which plays an important role in this paper, is defined for $g \in SL(N , \cx)$ and the embedded variety $\iota : X \inj \prj^{N-1}$ as the integral
\begin{equation} \label{eqdfmupcm}
	\bar{\mu}_X (g) := \int_{p \in \iota(X)} \mu_{p} (g) d \nu_{H_g} .
\end{equation}
We summarise the above in the following formal definition.

\begin{definition} \label{dfcomass}
	The \textbf{centre of mass} $\bar{\mu}_X (g) $, defined for $g \in SL(N , \cx)$ and $\iota : X \inj \prj^{N-1}$, is a hermitian matrix of size $N$ whose $(i,j)$-th entry is given in terms of the notation (\ref{eqzijg}) by
	\begin{equation*}
		\bar{\mu}_X (g)_{ij} := \int_{\iota(X)} \frac{Z_i(g) \overline{Z_j(g)}}{\sum_{l=1}^N |Z_l(g)|^2} d \nu_{H_g}, 
	\end{equation*}
	where $d \nu_{H_g}$ is the measure on $\iota (X)$ defined by the Fubini--Study metric on $\prj^{N-1}$ with respect to $H_g$, and integrates with respect to the variables $\{ Z_i \}_{i=1}^N$ over the locus $\{ [Z_1 : \cdots : Z_N] \in \iota (X) \} \subset \prj^{N-1}$.
\end{definition}

It is easy to see how $\mu_p (g)$ in (\ref{eqdfmup}) changes when $g$ is pre-multiplied by a unitary matrix $u$, as in the following lemma.

\begin{lemma} \label{eqcmutf}
For any $g \in SL(N, \cx)$, $u \in SU(N)$, and $p \in \prj^{N-1}$, we have
	\begin{equation*} 
		\mu_p (ug) = u \cdot \mu_p (g) \cdot  u^{*}
	\end{equation*}
\end{lemma}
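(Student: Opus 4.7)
The plan is to recognise that the entries in \eqref{eqdfmup} assemble into the rank-one hermitian matrix
\begin{equation*}
\mu_p(g) = \frac{Z(g) \, Z(g)^{*}}{\|Z(g)\|^2},
\end{equation*}
where $Z(g) := (Z_1(g), \dots, Z_N(g))^{T}$ is regarded as a column vector. By \eqref{eqzijg}, this column vector is simply $Z(g) = g \cdot Z$, with $Z = (Z_1, \dots, Z_N)^{T}$ denoting the homogeneous coordinates of the reference point $p$. Thus replacing $g$ by $ug$ gives $Z(ug) = u g Z = u \cdot Z(g)$.

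Substituting this into the displayed formula for $\mu_p$, the numerator becomes $(u Z(g))(u Z(g))^{*} = u \, Z(g) Z(g)^{*} \, u^{*}$, while the denominator transforms as
\begin{equation*}
\|u Z(g)\|^2 = Z(g)^{*} u^{*} u \, Z(g) = Z(g)^{*} Z(g) = \|Z(g)\|^2,
\end{equation*}
using $u^{*} u = e$ since $u \in SU(N)$. Dividing, one obtains exactly $u \cdot \mu_p(g) \cdot u^{*}$, which is the desired identity.

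There is no real obstacle here; the entire content of the lemma is the unitary invariance of the Euclidean norm on $\cx^{N}$, together with the bilinearity of the outer product. The only point that warrants care is consistently treating $Z(g)$ as a column vector so that $g$ acts on the left and the hermitian conjugates are placed correctly, but once this bookkeeping is fixed the computation is a single line.
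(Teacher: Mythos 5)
Your proof is correct and takes essentially the same route as the paper: the paper's one-line argument simply observes that the denominator $\sum_l |Z_l(g)|^2$ is unchanged under $g \mapsto ug$, leaving the (immediate) transformation of the numerator implicit, while you spell out both via the rank-one form $Z(g)Z(g)^*/\Vert Z(g)\Vert^2$. Nothing is missing.
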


\begin{proof}
	It is an obvious consequence of $\sum_{l=1}^N |Z_l(g)|^2 = \sum_{l=1}^N |Z_l(ug)|^2$ for any unitary matrix $u$. \qed
\end{proof}
	
Note, on the other hand, that we do not have an analogous formula for $\mu_p (gu)$.

\begin{remarks} \label{rmcmbd}
We observe some other elementary properties of the centre of mass which immediately follow from the definition.
	\begin{enumerate}
	\item Both $\mu_{p} (g)$ and $\bar{\mu}_X (g)$ are positive definite as a hermitian matrix for each $g \in SL (N , \cx)$.
	\item We observe that $\bar{\mu}_X (g)$ is nothing but the integral of $\mu_{p} (e)$ over $p \in g \cdot \iota(X)$ with respect to $d \nu_{H_g}$; $\bar{\mu}_X (g)$ can be regarded as the centre of mass of the Kodaira embedding $g \cdot \iota (X) \subset \prj^{N-1}$.
	\item $\bar{\mu}_X (g)$ is independent of the overall scaling of $g$, so depends only on its class in $PSL (N , \cx)$. Moreover, we observe that each entry of the integrand $\mu_{p} (g)$ of the centre of mass is manifestly bounded as a function of $g \in SL(N , \cx)$ for each $p \in \prj^{N-1}$.
	\end{enumerate}
\end{remarks}

Computing the centre of mass is in general difficult since $\bar{\mu}_X (g)$ depends on $g \in SL(N , \cx)$ (and the embedding $\iota : X \inj \prj^{N-1}$) in a highly nonlinear manner and the size $N$ of the matrices is typically large. However, there are some special cases in which we can explicitly compute it.

\begin{example} \label{expnlebd}
Take $X:= \prj^{N-1}$ and $L := \CO_{\prj^{N-1}}(1)$. Then, by using (\ref{eqvffspn}) and the polar coordinates for $\cx^{N-1}$, we find that $\bar{\mu}_{\prj^{N-1}} (g)$ is a constant multiple of the identity matrix for all $g \in SL(N, \cx)$; this computation is well-known to the experts and reduces to the periodicity of the angle coordinates, but the details can be found e.g.~in \cite[Lemma 2.7]{yhhilb}. In particular, $\mathbb{E}[ \bar{\mu}_{\prj^{N-1}} (g) ]$ is a constant multiple of the identity matrix for any probability measure $d \sigma$ on $SL (N , \cx)$.
\end{example}

\begin{example} \label{expnlebdtp}
	The above method using the polar coordinates also work for the case when $\prj^{n}$ is embedded in a higher dimensional projective space by the Veronese embedding, i.e.~when $L = \CO_{\prj^n} (m)$ for $m>1$, and $\{ Z_i (g) \}_{i=1}^N$ is given by the monomial basis for $H^0 (\prj^n, \CO_{\prj^n} (m))$, where $N = \dim_{\cx} H^0 (\prj^n, \CO_{\prj^n} (m)) $. As in the previous example, $\bar{\mu}_{\prj^n} (g)$ can be easily seen to be a diagonal matrix for $g \in SL (N , \cx)$ such that $\{ Z_i (g) \}_{i=1}^N$ is a monomial basis. By appropriately scaling the monomial basis, we find that there exists $g \in SL (N , \cx)$ such that $\bar{\mu}_{\prj^n} (g)$ is a constant multiple of the identity, and the explicit scaling can be written down as in \cite[Example2.4]{ArLoi04}.
\end{example}

We also have a variant of the centre of mass, introduced by Donaldson \cite[\S 2]{DonNum} as follows.

\begin{definition} \label{defcomnu}
	Let $d \nu$ be a fixed volume form on $\iota (X)$. We define a variant $\bar{\mu}_{X , \nu} (g)$ of (\ref{eqdfmupcm}) by the following formula
	\begin{equation*}
	\bar{\mu}_{X , \nu} (g) := \int_{p \in \iota(X)} \mu_{p} (g) d \nu ,
	\end{equation*}
	in which we replaced $d \nu_{H_g}$ in (\ref{eqdfmupcm}) by the fixed volume form $d \nu$.
\end{definition}

As we shall see later, it is straightforward to extend the results for $\bar{\mu}_X (g)$ to the variant $\bar{\mu}_{X , \nu} (g)$; indeed, the volume form $d \nu$ not depending on $g$ means that $\bar{\mu}_{X , \nu} (g)$ depends on $g$ in a much less nonlinear manner than $\bar{\mu}_{X} (g)$, and the proof turns out to be simpler.

\subsection{Proof of Theorem \ref{mainthm2}} \label{scpfmntm2}
The properties of the centre of mass presented in \S \ref{scmmcom} are sufficient for the proof of Theorem \ref{mainthm2}, which is elementary. We compute
\begin{equation*}
	\mathbb{E}_{SU}[\bar{\mu}_X (u)] := \int_{u \in SU(N)} \bar{\mu}_X (u) d \sigma_{SU} = \int_{u \in SU(N)}  d \sigma_{SU} \int_{p \in \iota(X)} \mu_{p} (u) d \nu_{H_u}.
\end{equation*}
Note first that $d \nu_{H_u} = d \nu_{H_e}$ for all $u \in SU(N)$ since $H_u = (uu^*)^{-1} = H_e$. Lemma \ref{eqcmutf} further implies that the above is equal to
\begin{equation*}
	\mathbb{E}_{SU}[\bar{\mu}_X (u)] = \int_{u \in SU(N)}  d \sigma_{SU} \left( u \cdot \int_{p \in \iota(X)} \mu_{p} (e) d \nu_{H_e} \cdot u^* \right) .
\end{equation*}
We pick and fix an arbitrary $\eta \in SU(N)$, and observe that the group invariance of the Haar measure implies
\begin{align*}
	&\int_{u \in SU(N)}  d \sigma_{SU}(u) \left( u \cdot \int_{p \in \iota(X)} \mu_{p} (e) d \nu_{H_e} \cdot u^* \right) \\
	&= \int_{\eta u \in SU(N)}  d \sigma_{SU}(\eta u ) \left( \eta u \cdot \int_{p \in \iota(X)} \mu_{p} (e) d \nu_{H_e} \cdot u^* \eta^* \right) \\
	&= \int_{u \in SU(N)}  d \sigma_{SU}(u ) \left( \eta u \cdot \int_{p \in \iota(X)} \mu_{p} (e) d \nu_{H_e} \cdot u^* \eta^* \right) \\
	&= \eta \cdot \int_{u \in SU(N)}  d \sigma_{SU}(u) \left( u \cdot \int_{p \in \iota(X)} \mu_{p} (e) d \nu_{H_e} \cdot u^* \right) \cdot \eta^*,
\end{align*}
which implies that we have
\begin{equation*}
	\mathbb{E}_{SU}[\bar{\mu}_X (u)] = \eta \cdot \mathbb{E}_{SU}[\bar{\mu}_X (u)] \cdot \eta^*
\end{equation*}
for any $\eta \in SU(N)$. Recalling that the centre of mass $\bar{\mu}_X (u)$ is an $N \times N$ hermitian matrix, this implies that $\mathbb{E}_{SU}[\bar{\mu}_X (u)]$ must be a constant multiple of the identity matrix since it is a hermitian matrix that commutes with all elements of $SU(N)$. Noting that $\mathrm{tr} ( \bar{\mu}_X (u)) = \mathrm{Vol} (X,L)$ for all $u \in SU(N)$, we find more explicitly that
\begin{equation*}
	\mathbb{E}_{SU}[\bar{\mu}_X (u)] = \frac{\mathrm{Vol} (X , L )}{N} \cdot \mathrm{id}_{N \times N},
\end{equation*}
which completes the proof of Theorem \ref{mainthm2}.

\begin{remark} \label{rmthm2var}
	Note that the above proof applies word by word to prove
\begin{equation*}
	\mathbb{E}_{SU}[\bar{\mu}_{X ,\nu} (u)] = \frac{\mathrm{Vol} (X , L )}{N} \cdot \mathrm{id}_{N \times N}
\end{equation*}
for the variant in Definition \ref{defcomnu}, by noting that $d \nu$ is fixed and remains invariant under the $SU(N)$-action.
\end{remark}

\section{Proof of Theorem \ref{mainthm}}

Observe first that the definition of the centre of mass (\ref{eqdfmupcm}) implies
\begin{align*}
	\mathbb{E} [ \bar{\mu}_X (g) ] &=\int_{SL (N , \cx)} d \sigma(g) \int_{x \in \iota(X)} \mu_x (g) d \nu_{H_g} \\
	&=\int_{SL (N , \cx)} d \sigma(g) \int_{x \in \iota(X)} \mu_x (g) \frac{\omega^n_{H_g}}{\omega^n_{H_e}} d \nu_{H_e} ,
\end{align*}
where $\mu_x$ is as defined in (\ref{eqdfmup}) and we endow $SL(N , \cx ) \times \iota (X)$ with the product measure $d \sigma \times d \nu_{H_e} $. We swap the order of the above integrals by Fubini's theorem to find
\begin{equation*}
	\mathbb{E} [ \bar{\mu}_X (g) ] = \int_{x \in \iota(X)} d \nu_{H_e}  \int_{SL (N , \cx)}  \mu_x (g) \frac{\omega^n_{H_g} (x)}{\omega^n_{H_e} (x)}  d \sigma(g) .
\end{equation*}



We first fix $x \in \iota(X)$, pick a hermitian $h \in SL(N , \cx)$ such that $\pi (g) = hh^*$, and compute the second integral in the above as
\begin{align*}
	&\int_{SL(N , \cx)} \mu_x (g ) \frac{\omega^n_{H_g} (x)}{\omega^n_{H_e} (x)}  d \sigma (g) \\
	&= \frac{1}{\mathrm{Vol} (\CB )} \int_{\CB } d \sigma_{B} (hh^*) \int_{SU(N)} \mu_x (hu ) \frac{\omega^n_{H_{hu}} (x)}{\omega^n_{H_e} (x)} d \sigma_{SU} (u)
\end{align*}
by using Lemma \ref{lmvldbca}, where we note that each entry of $\mu_x (g)$ is bounded (Remark \ref{rmcmbd}) and that $\pi^{-1} (hh^*) = h \cdot SU(N)$. Observe that we may write $h = \eta \Lambda \eta^{*}$ for some $\eta \in SU(N)$ and a diagonal matrix $\Lambda = \mathrm{diag} (\Lambda_1 , \dots , \Lambda_N )$ which we can identify with a vector in $\rl^N$. With this notation we may write
\begin{equation*}
	\pi (g) = (hu) \cdot (hu)^{*} = hh^{*} = \eta \Lambda^2 \eta^{*} ,
\end{equation*}
where $u \in SU(N)$. We also note
\begin{equation*}
\omega_{H_g}=\omega_{H_{hu}} = \iota^* \left( \ai \partial \bar{\partial} \log \left( \sum_{i=1}^N \Lambda^2_i |Z_i(\eta^{-1} u)|^2 \right) \right),
\end{equation*}
which implies that $\omega^n_{H_g} (x) / \omega^n_{H_e} (x)$ is bounded over $SL(N , \cx)$, since an overall scaling of $\Lambda$ leaves the above metric invariant.


Thus, by writing $\tilde{\Lambda} := \Lambda^2$, the above integral may be written as
\begin{align*}
	\frac{1}{\mathrm{Vol} (\CB )} &\int_{\tilde{\Lambda} \in \rl^{N-1}_{>0} } \Delta^2 (\tilde{\Lambda} ) \gamma (\tilde{\Lambda} ) \rho (\tilde{\Lambda}) d \tilde{\Lambda} \\
	&\int_{SU(N)} d \sigma_{SU} (\eta ) \int_{SU(N)} \mu_x (\eta \Lambda \eta^{-1} u ) \Psi_x (\Lambda ,  \eta^{-1} u ) d \sigma_{SU} (u) ,
\end{align*}
by (\ref{eqdhpdcp}) and (\ref{eqdstrho}), where we set
\begin{equation*}
	\Psi_x (\Lambda ,  \eta^{-1} u ) := \frac{\omega^n_{H_{hu}} (x)}{\omega^n_{H_e} (x)} =\frac{ \iota^* \left( \ai \partial \bar{\partial} \log \left( \sum_{i=1}^N \Lambda^2_i |Z_i(\eta^{-1} u)|^2 \right) \right)^n (x) }{\omega^n_{H_e} (x)},
\end{equation*}
$\rho (\tilde{\Lambda})$ is the Radon--Nikodym density of $d \sigma_B$, and $\Delta $, $\gamma$ are as in (\ref{eqdhpdcp}). By the group invariance of the Haar measure, we have
\begin{align*}
&\int_{u \in SU(N)} \mu_x (\eta \Lambda \eta^{-1} u ) \Psi_x (\Lambda ,  \eta^{-1} u ) d \sigma_{SU} (u) \\
&= \int_{\eta u \in SU(N)} \mu_x (\eta \Lambda  u ) \Psi_x (\Lambda ,  u ) d \sigma_{SU} ( \eta u)  \\
&=\int_{u \in SU(N)} \mu_x (\eta \Lambda  u ) \Psi_x (\Lambda ,  u ) d \sigma_{SU} ( u) \\
&= \eta \left( \int_{u \in SU(N)}  \mu_x (\Lambda  u )  \Psi_x (\Lambda ,  u ) d \sigma_{SU} (u) \right) \eta^{*}
\end{align*}
by recalling Lemma \ref{eqcmutf} and noting that
\begin{equation} \label{defpsi}
	\Psi_x (\Lambda ,  u )= \frac{ \iota^* \left( \ai \partial \bar{\partial} \log \left( \sum_{i=1}^N \Lambda^2_i |Z_i( u)|^2 \right) \right)^n (x) }{\omega^n_{H_e} (x)} 
\end{equation}
does not depend on $\eta$, where the homogeneous coordinates $[Z_1 : \cdots : Z_N]$ are evaluated at $x \in \iota(X)$.

We are thus reduced to first computing
\begin{equation} \label{eqcmmmp}
	\int_{SU(N)} \mu_x (\Lambda u ) \Psi_x (\Lambda ,  u ) d \sigma_{SU} (u).
\end{equation}
We claim that the off-diagonal entries of the above integral are zero. Since any $x \in \iota(X) \subset \prj^{N-1}$ can be moved to $p_0=[1: 0: \cdots : 0]$ by the $SU(N)$-action, for the moment we assume without loss of generality that $x=p_0$, by using the $SU(N)$-invariance of the Haar measure. Since $p_0$ is fixed by the subgroup $S(U(1) \times U(N-1))$ of $SU(N)$, the integral (\ref{eqcmmmp}) is in fact an integral over $\prj^{N-1} = SU(N)/ S(U(1) \times U(N-1))$. We now recall that a group invariant measure on a homogeneous space (if exists) is unique up to an overall positive multiplicative constant by \cite[Chapter III, \S 4, Theorem 1]{nachbin}, which is a result credited to Weil in \cite{nachbin}. Thus, the measure on $\prj^{N-1}$ induced by the Haar measure $d \sigma_{SU}$ agrees, up to an overall constant multiple, with the $SU(N)$-invariant Fubini--Study measure $\tilde{\omega}^{N-1}_{H_e}$. Thus, by using the homogeneous coordinate system $[Z_1 : \dots : Z_N ]$ given by the reference basis, we find that the $(i,j)$-th entry of (\ref{eqcmmmp}) is equal to
\begin{equation} \label{eqcmmmpij}
	\int_{\prj^{N-1}} \frac{\Lambda_i \Lambda_j Z_i \bar{Z}_j}{\sum_{l=1} \Lambda_l^2 |Z_l|^2} \Psi_x (\Lambda ,  [Z_1 : \dots : Z_N ] ) \tilde{\omega}^{N-1}_{H_e}
\end{equation}
up to an overall constant multiple, where $\Psi_x ( \Lambda , [Z_1 : \dots : Z_N ] )$ stands for $\Psi_x ( \Lambda , u )$ with the identification given by $\prj^{N-1} = SU(N)/ S(U(1) \times U(N-1))$ as above. By recalling the formula (\ref{eqvffspn}) for the Fubini--Study volume form on $\cx^{N-1} \subset \prj^{N-1}$ and writing the above integral in terms of polar coordinates, we find that (\ref{eqcmmmpij}) is zero if $i \neq j$ because of the periodicity of the angle coordinates, by performing the computation as in \cite[Lemma 2.7]{yhhilb} (and as pointed out in Examples \ref{expnlebd} and \ref{expnlebdtp}), since $\Psi_x ( \Lambda , [Z_1 : \dots : Z_N ] )$ does not depend on the angle coordinates as we can see from the formula (\ref{defpsi}).

Thus we find
\begin{equation*}
	\int_{SU(N)} \mu_x (\Lambda  u ) \Psi_x (\Lambda ,  u ) d \sigma_{SU} (u) = \mathrm{diag} (\alpha_1 (\Lambda ) , \dots , \alpha_N (\Lambda ) )
\end{equation*}
for some maps $\alpha_i : \rl^N \to \rl_{\ge 0}$ ($i=1 , \dots , N$); observe that each $\alpha_i$ depends smoothly on $\Lambda$ and is bounded over $\rl^N$, since the $(i,i)$-th entry of the integrand is
\begin{equation*}
	\frac{\Lambda^2_i |(u  \tilde{x})_i|^2 }{\sum_{j=1}^N \Lambda_j^2 |(u \tilde{x})_j|^2} \Psi_{u \tilde{x}} (\Lambda ,  e ) ,
\end{equation*}
where $\tilde{x} \in \cx^N$ is any nonzero lift (i.e.~the homogeneous coordinates) of $x \in \iota (X) \subset \prj^{N-1}$. We further observe that each $\alpha_i$ does not depend on $x \in \iota (X)$, since for any $x' \in \iota (X)$ there exists $u' \in SU(N)$ such that $x' = u'  x$ (as $SU(N)$ acts transitively on the ambient $\prj^{N-1}$) and hence the dependence on $x$ is integrated out by the group invariance of the Haar measure. Moreover, the above formula and (\ref{defpsi}) imply that each $\alpha_i$ can be naturally regarded as a function of $\tilde{\Lambda} = \Lambda^2$, and hence by abuse of notation we shall write $\alpha_i ( \tilde{\Lambda})$ for $\alpha_i ( \Lambda )$, which can be considered as a smooth bounded function on $\rl_{>0}^N$.

 Let $C_N := \{ \eta_1 , \dots , \eta_N \}$ be the group of cyclic permutations of $N$ letters, which is naturally a subgroup of $U(N)$. We then find
\begin{equation*}
	\sum_{i=1}^N \eta_i  \left( \int_{SU(N)} \mu_x (\Lambda  u ) \Psi_x (\Lambda ,  u ) d \sigma_{SU} (u) \right)  \eta^{*}_i = \alpha ( \tilde{\Lambda} ) \cdot \mathrm{id}_{N \times N},
\end{equation*}
with $\alpha ( \tilde{\Lambda} ) := \sum_{i=1}^N \alpha_i ( \tilde{\Lambda})$; we also note that in the above we may assume $\eta_i \in SU(N)$ for $i=1 , \dots , N$ by dividing them by an $N$-th root of $\det ( \eta_i ) \in U(1)$ which leaves the above integral invariant. Thus we get, again by the group invariance of the Haar measure,
\begin{align*}
	&\int_{\eta \in SU(N)} d \sigma_{SU} (\eta ) \int_{u \in SU(N)} \eta \mu_x (\Lambda  u ) \eta^{*} \Psi_x (\Lambda ,  u ) d \sigma_{SU} (u) \\
	&= \frac{1}{N} \sum_{i=1}^N \int_{\eta \eta_i^{-1} \in SU(N)} ( \eta  \eta_i^{-1}) \eta_i \\
	&\quad \quad \quad \quad \quad \quad \left( \int_{u \in SU(N)} \mu_x (\Lambda  u ) \Psi_x (\Lambda ,  u ) d \sigma_{SU} (u) \right)  \eta^{*}_i ( \eta  \eta_i^{-1})^*  d \sigma_{SU} (\eta \eta_i^{-1}) \\
	&=\int_{\eta \in SU(N)} \eta  \left( \frac{1}{N} \sum_{i=1}^N  \eta_i \left( \int_{u \in SU(N)} \mu_x (\Lambda  u ) \Psi_x (\Lambda ,  u ) d \sigma_{SU} (u) \right) \eta^{*}_i \right)  \eta^{*}  d \sigma_{SU} (\eta ) \\
	&= \frac{\alpha ( \tilde{\Lambda} )}{N} \cdot \mathrm{id}_{N \times N} ,
\end{align*}
and hence, by recalling that $\alpha ( \tilde{\Lambda} )$ does not depend on $x \in X$ as pointed out in the above, we find
\begin{align*}
	\mathbb{E}[\bar{\mu}_X (g)] &= \frac{1}{\mathrm{Vol} (\CB )} \int_{x\in \iota(X)} d \nu_{H_e} \int_{ \tilde{\Lambda} \in \rl^{N-1}_{>0} } \Delta^2 (\tilde{\Lambda} ) \gamma (\tilde{\Lambda} ) \rho (\tilde{\Lambda})  d \tilde{\Lambda} \\
	&\quad \quad \quad \quad \int_{\eta \in SU(N)} d \sigma_{SU} (\eta ) \int_{u \in SU(N)} \mu_x (\eta \Lambda  u ) \Psi_x (\Lambda ,  u ) d \sigma_{SU} (u) \\
	&= \left( \frac{\mathrm{Vol} (X , L )}{N \mathrm{Vol} (\CB )}  \int_{\tilde{\Lambda} \in \rl^{N-1}_{>0} } \alpha (\tilde{\Lambda} ) \Delta^2 (\tilde{\Lambda}) \gamma (\tilde{\Lambda}  ) \rho (\tilde{\Lambda}) d \tilde{\Lambda} \right) \cdot \mathrm{id}_{N \times N} .
\end{align*}
Since $\alpha (\tilde{\Lambda})$ is bounded over $\rl_{>0}^N$, the integral
\begin{equation*}
	\int_{\tilde{\Lambda} \in \rl^{N-1}_{>0} } \alpha (\tilde{\Lambda} ) \Delta^2 (\tilde{\Lambda} ) \gamma (\tilde{\Lambda} ) \rho (\tilde{\Lambda}) d \tilde{\Lambda}
\end{equation*}
is a well-defined real number by (\ref{eqrhfvpd}) because $d \sigma_B$ is of finite volume. In fact, the above integral is equal to $\mathrm{Vol} (\CB )$, by observing
\begin{equation*}
	\mathrm{tr} ( \mathbb{E}[\bar{\mu}_X (g)] ) =  \mathrm{Vol} (X , L )
\end{equation*}
since $\mathrm{tr} (\bar{\mu}_X (g)) = \mathrm{Vol} (X , L )$ for all $g \in SL (N , \cx)$ and $d \sigma$ is assumed to have unit volume as in (\ref{equvlsg}). Thus we finally get
\begin{equation*}
	\mathbb{E}[\bar{\mu}_X (g)] = \frac{\mathrm{Vol} (X , L )}{N} \cdot \mathrm{id}_{N \times N}
\end{equation*}
as claimed, with respect to the fixed reference basis $\{ Z_i \}_{i=1}^N$ (see Remark \ref{rmrfbs}). This completes the proof of Theorem \ref{mainthm}.

\begin{remark} \label{rmthmvar}
It is straightforward to extend the above proof to the variant $\bar{\mu}_{X , \nu}$ in Definition \ref{defcomnu}. First note that we have
\begin{equation*}
	\mathbb{E} [ \bar{\mu}_{X , \nu} (g) ] =  \int_{SL(N , \cx )} d \sigma (g) \int_{x\in \iota(X)} \mu_x (g ) d \nu   ,
\end{equation*}
by definition. Noting that $d \nu$ is fixed and does not depend on $g \in SL(N , \cx )$, we again apply Fubini's theorem to $SL(N , \cx ) \times \iota (X)$ with the product measure $d \sigma \times d \nu $, to find
\begin{equation} \label{eqavcomnu}
	\mathbb{E} [ \bar{\mu}_{X , \nu} (g) ] =   \int_{x\in \iota(X)} d \nu \int_{SL(N , \cx )}  \mu_x (g ) d \sigma (g) 
\end{equation}
and repeat the argument presented above.
\end{remark}

\addcontentsline{toc}{section}{References}
\bibliography{random.bib}

\begin{flushleft}
{\footnotesize
Department of Mathematics, Tokyo Institute of Technology, \\
2-12-1 Ookayama, Meguro-ku, Tokyo, 152-8551, Japan. \\
Email: \texttt{hashimoto@math.titech.ac.jp}}
\end{flushleft}

\begin{flushleft}
{\footnotesize
Current address: Department of Mathematics, Osaka Metropolitan University, \\
3-3-138, Sugimoto, Sumiyoshi-ku, Osaka, 558-8585, Japan. \\
Email: \texttt{yhashimoto@omu.ac.jp}}
\end{flushleft}

\end{document}